\title{Non-formality of \\ the odd dimensional framed little balls operads }
\author{Syunji Moriya}
\thanks{The  author is partially supported by JSPS KAKENHI Grant Number 26800037.}
\address{Department of Mathematics and Information Sciences, Osaka Prefecture University, Sakai, 599-8531, Japan}
\email{moriyasy@gmail.com}
\date{\today}
\theoremstyle{definition}
\newtheorem{defi}{Definition}[section]
\newtheorem{rem}[defi]{Remark}
\theoremstyle{plain}
\newtheorem{lem}[defi]{Lemma}
\newtheorem{thm}[defi]{Theorem}
\newcommand{\C}{\mathcal{C}}
\newcommand{\CH}{\mathcal{CH}}
\newcommand{\oper}{\mathcal{O}}
\newcommand{\A}{\mathcal{A}}
\newcommand{\K}{\mathcal{K}}
\newcommand{\fK}{f\mathcal{K}}
\newcommand{\CHS}{\mathcal{S}}
\newcommand{\D}{\mathcal{D}}
\newcommand{\aoper}{\mathcal{P}}
\newcommand{\homology}{\mathcal{H}}
\newcommand{\CB}{\mathsf{\omega SO}}
\newcommand{\Q}{\mathbb{Q}}
\newcommand{\ttot}{\widetilde{\operatorname{Tot}}}
\newcommand{\R}{\mathbb{R}}
\newenvironment{itemize2}{
 \begin{list}{$\bullet$\ \ }%
 {\setlength{\itemindent}{0pt}
  \setlength{\leftmargin}{1.5em}    
  \setlength{\rightmargin}{0em}   
  \setlength{\labelsep}{0.5em}      
  \setlength{\labelwidth}{3em}    
  \setlength{\itemsep}{0em}       
  \setlength{\parsep}{0em}        
  \setlength{\listparindent}{0em} 
  \setlength{\topsep}{0pt} 
}
}{
 \end{list}
}
\begin{document}
\maketitle
\begin{abstract}
We prove that the  chain operad of the framed little balls (or disks)  operad is not formal as a non-symmetric operad over the rationals if the dimension of their balls is odd and greater than 4.
\end{abstract}
\section{Introduction}
A chain operad $\oper$ is said to be formal if  $\oper$ and the homology operad $H_*(\oper)$ is connected by a chain of weak equivalences (see section \ref{Spreliminary} for the precise definition).

The formality of the chain little balls operads is a very important property. It is first discovered by D. Tamarkin in the 2 dimensional case, and applied to the proof of deformation quantization, and generalized by M. Kontsevich to the arbitrary dimension ( see \cite{LV} for a detailed description of Kontsevich's sketchy construction). Combined with Goodwillie-Klein-Weiss embedding calculus, the formality is also used to compute the homology of knot spaces and more generally, embedding spaces  (see \cite{sinha,LTV,ALV,AT,tsopmene2}). \\
 \indent The framed little balls operads are cousins of the little balls operads, which encode rotations of balls, and appear in many areas. For example, They play an important role in the embedding calculus, in particular in the case that involving manifolds are not parallelizable (see \cite{BW, turchin}). \\
\indent J. Giansiracusa and P. Salvatore \cite[Theorem.A]{GS} proved the formality of framed little 2-balls (disks) operad, and they questioned whether the framed little balls operads of higher dimensions are formal.\\
\indent In this paper, we give an answer to this question  for odd dimensions greater than 4.
\begin{thm}\label{Tmain1}
Let $d$ be an odd number greater than 4. If the coefficients are  the rational numbers, the $d$-dimensional chain framed little  balls operad is not formal as a  non-symmetric operad.
\end{thm}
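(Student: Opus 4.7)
The plan is to exhibit a non-vanishing operadic Massey-type product in the chain operad $\fK_d$, which obstructs formality. The key observation is the semi-direct product decomposition $\fK_d \simeq \K_d \rtimes SO(d)$. Over $\mathbb{Q}$ both factors are individually formal (the first by Kontsevich, the second because $SO(d)$ for odd $d$ is rationally equivalent to a product of odd spheres $S^3 \times S^7 \times \cdots \times S^{2d-3}$), so any non-formality must come from the twisting between them. This forces the obstruction to be some higher-order operation mixing the rotational classes with the little-balls composition, and since we are working with the non-symmetric structure, even witnessing the commutativity of the binary product becomes a non-trivial chain-level datum whose compatibilities can fail.

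Concretely, I would first fix a convenient geometric chain model (a Fulton--MacPherson-type compactification of the framed configuration space, with an explicit cellular or semi-algebraic chain complex), and identify the relevant homology classes: the arity-$1$ rotational generators $\alpha_i \in H_{4i-1}(SO(d);\mathbb{Q})$ coming from the exterior algebra $H_*(SO(d);\mathbb{Q}) = \Lambda(\alpha_1,\ldots,\alpha_n)$ with $d = 2n+1$, together with the binary generators $\mu$ (product) and $\lambda$ (Browder bracket, degree $d-1$) in $H_*(\K_d;\mathbb{Q})$. The hypothesis $d > 4$ gives $n \geq 2$, ensuring at least two independent rotational classes, which is what opens room for a higher operation. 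The candidate obstruction is an operadic Massey product, for instance a ternary bracket of the form $\langle \alpha_i, \mu, \alpha_j \rangle$ (composed via the $\circ_k$'s), or a similar expression involving $\lambda$, whose vanishing would be forced by formality via the standard argument on minimal models of dg operads.

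The main obstacle is the chain-level verification: producing explicit null-homotopies in the geometric model that realize the defining partial compositions of the Massey product, assembling them into a cocycle, and showing that its cohomology class is non-zero modulo the indeterminacy. This requires a detailed analysis of the boundary strata of the framed compactification and a precise understanding of how $SO(d)$-framings twist the Gerstenhaber structure along adjacent strata; one expects the generator $\alpha_1$ of degree $3$ to interact with $\alpha_2$ of degree $7$ in a way that cannot be straightened to zero, precisely because $SO(d)$ is only formal as a space, not as an $SO(d)$-space with the little-balls action. Once such a non-vanishing operadic Massey product is in hand, any chain of quasi-isomorphisms of non-symmetric dg operads connecting $\fK_d$ to $H_*(\fK_d)$ over $\mathbb{Q}$ is ruled out, establishing Theorem~\ref{Tmain1}.
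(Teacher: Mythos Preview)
Your proposal is a plan rather than a proof, and the specific candidate obstruction you identify is not the one that works. You propose a Massey product of the shape $\langle \alpha_i,\mu,\alpha_j\rangle$ mixing \emph{two} rotational classes, and you justify the hypothesis $d>4$ by saying it guarantees at least two such classes. But the actual obstruction involves only the \emph{single} top primitive generator $\beta_m\in H_{4m-1}(SO_d)$ (where $d=2m+1$) together with \emph{two} copies of the multiplication $\mu$: schematically it is a triple bracket of the form $\{\beta_m,\mu,\mu\}$, built from the primitivity relation $\beta_m\circ_1\mu=\mu\circ_1\beta_m+\mu\circ_2\beta_m$ and the associativity relation $\mu\circ_1\mu=\mu\circ_2\mu$. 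The condition $d>4$ enters not through counting rotational generators but through a vanishing-line/convergence argument for a spectral sequence.

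The second and more serious gap is that you give no mechanism for showing non-vanishing. You acknowledge that ``the main obstacle is the chain-level verification'' and propose to analyse boundary strata of a framed compactification directly, but you do not carry this out, and it is not clear that a bare-hands geometric computation of null-homotopies and their assembly is feasible. The paper avoids this entirely: it endows $f\mathcal{K}_d$ with a multiplicative structure, forms the associated cosimplicial space, and proves that the Bousfield--Kan spectral sequence has a nonzero $d^2$-differential $d^2(1\otimes\bar\beta_m)=\{x,x\}\neq 0$ by a counting argument comparing generators of $E^2\cong H_*(\Omega^2 S^{d-1})\otimes H_*(\Omega SO_d)$ with those of the abutment $H_*(\Omega SO_{d-1})$. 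The obstruction class is then \emph{identified} with this differential, so its non-vanishing comes for free. Finally, the passage from ``obstruction nonzero'' to ``not formal as a non-symmetric operad'' is handled via a cofibrant replacement in the model category of chain operads, which controls the dependence on chain-level choices; your proposal does not address this indeterminacy issue either.
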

 It is clear that non-formality as a non-symmetric operad  implies non-formality as a  symmetric operad. Theorem \ref{Tmain1} might be an unhappy result in view of computations on embedding spaces but the author  expects the obstruction to formality given in section \ref{Sobstruction} will be useful to understand the higher information of  the chain framed little disks operad. \\
\indent We shall give an outline of the proof of Theorem \ref{Tmain1}. (We actually prove Theorem \ref{TKontsevich}, part 1 of which is equivalent to Theorem \ref{Tmain1}.) We use the framed Kontsevich operad instead of the framed little balls operad since it has a structure of a multiplicative operad. To a multiplicative operad, J. E. McClure and J. H. Smith associated a cosimplical space. We associate a homology spectral sequence to the cosimplicial space by the Bousfield- Kan's procedure.  In section \ref{Snondegeneracy}, We see that the spectral sequences associated to the odd-dimensional framed Kontsevich operads do not degenerate at $E^2$-page. Here we actually deal with  simpler operads, the framed choose-two operads, mainly. The non-degeneracy of the spectral sequences of the framed Kontsevich operads follows from that of the framed choose-two operads. We detect an element whose differential is non zero. The argument is  essentially a simple observation for the number of generators and degrees. Note that formality as a multiplicative operad implies degeneracy of the spectral sequence at $E^2$-page as in \cite{LTV,moriya,tsopmene}, so in turn, non-degeneracy of the spectral sequence implies non-formality as a multiplicative operad. But  it  is weaker than non-formality as a non-symmetric operad.
For example, the 2-dimensional chain Kontsevich operad is not formal as a multiplicative operad though it is formal as (non-)symmetric operad (see \cite{TW}).  To obtain the latter non-formality, we introduce an obstruction to formality which can be defined for any (possibly non-multiplicative) operad the homology of which is isomorphic to that of the framed little balls  operad. This obstruction is something like "Massey triple bracket" for Gerstenhaber bracket (see Remark \ref{Robstruction}). Unlike the usual Massey triple product, our obstruction might depend not only on classes but also on cycles. (At least, we do not prove such independence.) To deal with technical issues about choices of cycles, we make use of a model category of operads.   We prove the main theorem \ref{Tmain1} (or \ref{TKontsevich}) by using the fact that the obstruction is equal to the non-trivial differential of the spectral sequences for the framed operads. \\
\indent Other non-formality results are found in \cite{TW,livernet}. In \cite{TW} the authors proved non-formality of codimension one inclusion between the little balls operads. In \cite{livernet}, the author proved non-formality of Swiss-cheese operad and considered a general Massey product for partial compositions of operads.

\section{Preliminary}\label{Spreliminary}
In this section, we shall recall basic definitions and known results, and  fix notations. 
\begin{itemize}

\item In the present paper, the coefficients of all modules are in the field of  rational numbers $\Q$. In particular, all  homology groups are supposed to have the rational coefficients. 
\item The term  "operad" means non-symmetric operad. A \textit{(non-symmetric) operad} in a symmetric monoidal category $(\C,\otimes,1)$ consists of a sequence of objects $\oper(0)$, $\oper(1),\dots,\oper(n),\dots$ in $\C$ and a set of partial compositions 
$-\circ_i-:\oper(m)\otimes\oper(n)\to \oper(m+n-1)$. which satisfies axioms of associativity and unity (see  \cite[section 2]{muro} or \cite[Variants 1.2]{KM}). A morphism of operads $f:\oper\to\aoper$ is a sequence of morphisms $\{f_n:\oper(n)\to \aoper(n)\}_{n\geq 0}$ which is compatible with the partial compositions and preserves the unit. We call $\oper(n)$ the part of arity $n$ of $\oper$. If $\C$ is the monoidal category  $Top$ of  topological spaces, continuous maps, and the cartesian product (resp.  $\CH$ of chain complex, chain maps, and the tensor product over $\Q$), we call an operad in $\C$ a topological operad (resp. a chain operad). The singular chain functor $C_*:Top\to \CH$ (with the rational coefficients) assigns a chain operad to a topological operad.
\item A morphism $f:\oper\to \aoper$ of topological operads (resp. chain operads)  is said to be a \textit{ weak equivalence} if $f_n:\oper(n)\to \aoper(n)$ is a weak homotopy equivalence (resp. a quasi-isomorphism ) for each $n\geq 0$. A chain operad $\oper$ is said to be \textit{ formal} if there exists a chain of weak equivalences of operads connecting $\oper$ and $H_*(\oper)$:
\[
\xymatrix{\oper &\oper_1\ar[l]\ar[r] &\cdots & \oper_N\ar[l]\ar[r] & H_*(\oper),}
\]
where  $H_*(\oper)$ is a chain operad given by $H_*(\oper)(n)=H_*(\oper(n))$   with the zero differential.
\item We shall deal with the \textit{Kontsevich operad} $\K_d$ and the \textit{choose-two operad $\CHS_d$} (see \cite[section 3, 4, Example 7.4]{sinha}, where the term "choose-two operad" represents a different concept, in their notation, our $\CHS_d$ is equal to $(S^{d-1})^{S^2_\bullet}$, or \cite[section 2]{salvatore}, where the choose-two operad is denoted by $B_n$). We shall recall the definition of operads $\K_d$ and $\CHS_d$ in some details. Let $d$ be a positive integer and   $S^{d-1}=\{x\in\R^d\mid ||x||=1\} $ be the standard unit $(d-1)$-sphere in the Euclidean space $\R^d$. We put 
\[
\CHS_d(n)=\prod_{1\leq i<j\leq n}S^{d-1}.
\]
Let $F_n(\R^d)$ denote the space of ordered configurations of $n$-points in $\R^d$. We define a map $\theta:F_n(\R^d)\to \CHS_d(n)$ by
\[
\theta(x_1,\dots,x_n)=\left(\frac{x_j-x_i}{|x_j-x_i|}\right)_{i,j}
\]
In other words, the $(i,j)$-component of $\theta(x_1,\dots,x_n)$ is the direction vector from $x_i$ to $x_j$. The space $\K_d(n)$ is defined to be the closure of the image of $\theta$ in $\CHS_d(n)$. We set $\CHS_d(n)=\K_d(n)=*$ for $n=0,1$. To get the partial composition of $\CHS_d$, we shall define a map
\[
(-\circ_i-):F_m(\R^d)\times F_n(\R^d)\to \CHS_d(m+n-1)
\] 
for $i=1,\dots, m$. Take $(x_1,\dots,x_m)\in F_m(\R^d)$ and $(y_1,\dots,y_n)\in F_n(\R^d)$. Intuitively speaking, $(x_1,\dots,x_m)\circ_i(y_1,\dots,y_n)$ is represented by the configuration  made from $x_1,\dots,x_m$ by replacing $x_i$ with $y_1,\dots,y_n$ which are infinitesimally rescaled. More precisely, 
we first put 
\[
(w_1^r,\dots, w_{m+n-1}^r)=(x_1,\dots, x_{i-1},x_i+ry_1,\dots x_i+ry_n,x_{i+1},\dots,x_n)
\]
where $ry_k$ means scalar multiplication by a positive number $r$. Note that if $r$ is sufficiently small, $(w_1^r,\dots, w_{m+n-1}^r)$ belongs to $F_{m+n-1}(\R^d)$. Then, we set
\[
(x_1,\dots,x_m)\circ_i(y_1,\dots y_n)=\lim_{r\to 0}
\theta (w_1^r,\dots, w_{m+n-1}^r).
\]
 Note that the limits of direction vectors of $w_1^r,\dots w_{m+n-1}^r$ depend only on the direction vectors between two of  $x_1,\dots, x_m$ and of $y_1,\dots, y_n$, so the above map $(-\circ_i-)$ is naturally extended to a map $(-\circ_i-): \CHS_d(m)\times \CHS_d(n)\longrightarrow \CHS_d(m+n-1)$, which gives $\CHS_d$ a structure of an operad (see \cite[section 2]{salvatore} for an explicit formula of this partial composition).  It is known that the restriction of  this partial composition to $\K_d$ factors through $\K_d$ and  we endow $K_d$ this structure of a sub-operad of $\CHS_d$. (see \cite[Theorem 4.5]{sinha}). $\K_d$ and the  little balls operad $\D_d$ is known to be weak equivalent as  topological operads for each $d\geq 1$. In other words, $\K_d$ and $\D_d$ are connected by a chain of weak equivalences. 
\item The framed version of $\K_d$ and $\CHS_d$ is given as a semidirect product with the rotation group $SO_d$.  A notion of a \textit{semidirect product} of an operad is introduced by N. Wahl and P. Salvatore \cite[Definition 2.1]{SW}. Let $\oper$ be a topological operad and $G$ be a topological group. Suppose each $\oper(n)$ has a $G$-action which satisfies some compatibility conditions (see ibid.). We define a topological operad $\oper\rtimes G$ by
\[
\begin{split}
\oper\rtimes G(n)&=\oper(n)\times G^n\\
(x;g_1,\dots, g_m)\circ_i(y;h_1,\dots,h_n)&=(x\circ_i(g_i\cdot y);g_1,\dots,g_ih_1,\dots, g_ih_n,g_{i+1},\dots, g_m)
\end{split}
\]
for $(x,g_1,\dots,g_m)\in \oper\rtimes G(m)$, $(y,h_1,\dots,h_n)\in \oper\rtimes G(n)$. The most important example of  semi-direct products is the $d$-\textit{dimensional  framed little balls operad}  which is the semi-direct product with respect to the natural action of the rotation group $SO_d$ on the $d$-dimensional little balls operad $\D_d$ (see Example 2.2 of ibid.). $\K_d$ and $\CHS_d$ also have  the actions of $SO_d$ which are induced by the restriction of the natural action on $\R^d$ to $S^{d-1}$. The inclusion $\K_d\to \CHS_d$ preserves these actions. For $\oper =\D_d, \K_d$, or $\CHS_d$, we put $f\oper =\oper\rtimes SO_d$. We call $f\K_d$  (resp. $f\CHS_d$) the $d$-\textit{dimensional framed Kontsevich operad} (resp. the $d$-\textit{dimensional framed choose-two operad}). $f\K_d$ and the framed little balls operad $f\D_d$ are known to be weak equivalent as  topological operads (see \cite[section 3]{salvatore}). 
\item  We will use McClure-Smith's procedure which produces cosimplicial objects $\oper^\bullet$ from a multiplicative operad (see \cite{MS}).  Let $\A$ denote the associative operad. (We consider the unital version so that $\A(0)$ is a point).  A \textit{multiplicative operad} (or an \textit{operad with multiplication}) is a morphism from $\A$ to an operad $\oper$ and a morphism of multiplicative operads is the same as a morphism of the under category. We can associate a cosimplicial space $\oper^\bullet$ to a multiplicative operad $f:\A\to\oper$ as follows. Let $\mu\in\oper(2)$ (resp. $e\in\oper(0)$) be the image of the unique element of $\A(2)$ (resp. of $\A(0)$) by $f$. We put $\oper^n=\oper(n)$ for each integer $n\geq 0$, and we define the coface $d^i:\oper^n\to\oper^{n+1}$ and codegeneracy $s^i:\oper^n\to\oper^{n-1}$ by 
\[
\begin{split}
d^0(x)=\mu\circ_2 x,\ d^{n+1}(x)&=\mu\circ_1 x,\ d^i(x)=x\circ_i\mu\ \quad (1\leq i\leq n)\\
s^i(x)&=x\circ_i e \quad (1\leq i\leq n)
\end{split}
\]
The main advantage of the Kontsevich operad to the little balls operad is that it has a structure of mulitiplicative operad. We set $v_0=(1,0,\dots,0)$ as the base point of $S^{d-1}$. The map $\A(2)\to \CHS_d(2)=S^{d-1}$ taking the value on $v_0$ uniquely extends to a morphism $\A\to\CHS_d$. This morphism factors through $\K_d$. We regard $\CHS_d$ and $\K_d$ as multiplicative operads with these morphisms 
(see \cite[section2]{salvatore}, \ \cite[Proposition 4.7]{sinha} ). For $\oper=\CHS_d,\ \K_d$, We also regard $f\oper$ as a multiplicative operad with the composition $\A\to\oper\subset f\oper$ where the inclusion is induced by the inclusion to the unit $*\to SO(d)$. We deal with  the cosimplicial objects $\CHS_d^\bullet$, $\K_d^\bullet$, $f\CHS_d^\bullet$, and $f\K_d^\bullet$.    
\item For a cosimplicial space $Y^\bullet$   let  $E^r_{p,q}(Y^\bullet )$ denote the $E^r$-page of Bousfield-Kan spectral sequence associated to the cosimplicial chain complex 
$C_*(Y^\bullet)$ (Note that $E^r_{p,q}$ is the part of cosimplicial degree $-p$, chain degree $q$). 
\item  For a topological multiplicative operad $\oper$, $HH_{*,*}(H_*(\oper))$ denotes the Hochschild homology of $H_*(\oper)$ considered as a chain operad with the zero differential. It is the  homology group of the normalization of the cosimplical graded  group $H_*(\oper)$ (see \cite[section 3]{T},\cite[Definition 18]{salvatore}, \cite[Definition 2.17]{sinha}). The Hochschild homology  has the  bigrading analogous to that of $E^r_{*,*}(Y^\bullet)$ and a natural structure of a Gerstenhaber alegbra. (see \cite[section 2]{GV} where a Gerstenhaber algebra is called a G-algebra , and \cite[section 3]{T},\cite[subsection 4.3]{sakai}).
$E^2_{*,*}(\oper^\bullet )$ also has a  structure of a Gerstenhaber algebra as it is naturally isomorphic to $HH_{*,*}(H_*(\oper))$

\item For $k=d,\ d-1$, $\CB_k$ denotes  the cobar complex for the coalgebra $H_*(SO_k)$ with Alexander-Whitney diagonal. $H_{*,*}(\CB_k)$ denotes the total homology with the natural bigrading. 
\item For a pointed topological space $X$, let $\underline{X}^\bullet$ denote the usual cobar cosimplicial space (so $\underline{X}^n=X^{\times n}$), and $\Omega(X)$ denote the based loop space of $X$. We let  $\ttot$ denote the homotopy totalization (or limit) functor for  cosimplicial spaces.
\end{itemize}
\begin{lem}[Corollary 10 and Proposition 16 of  \cite{salvatore}]\label{Ltot} For any integer $d\geq 2$, we have weak homotopy equivalences:
\[
\ttot(\CHS_d^\bullet)\simeq \Omega^2 (S^{d-1}), \quad   
\ttot(f\CHS_d^{\bullet})\simeq \Omega(SO_{d-1}).
\] The second equivalence is induced by the cosimplicial map 
$\underline{SO_{d-1}}^\bullet \to f\CHS_d^\bullet $ defined  by the usual inclusion $SO_{d-1}\to SO_d$ to the subgroup consisting of matrices whose first column is $v_0$, and  the configurations which are the images by the fixed morphism $\A\to f\CHS_d$.
\begin{flushright}
\qedsymbol
\end{flushright}
\end{lem}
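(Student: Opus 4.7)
The plan is to identify both cosimplicial spaces with (iterated) cobar-type constructions so that the standard equivalence $\ttot(\underline{X}^\bullet)\simeq \Omega X$ (for a pointed simply connected space $X$) can be applied. This reduces both equivalences to a bookkeeping of coordinate factors in the product decomposition of $\CHS_d(n)$.

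For the first equivalence, I exploit that $\CHS_d(n) = (S^{d-1})^{\binom{n}{2}}$, with factors indexed by ordered pairs $1\leq i<j\leq n$. Because the multiplicative operad map $\A\to\CHS_d$ sends the generator of $\A(2)$ to the basepoint $v_0$, every coface $d^i$ inserts $v_0$ in the new coordinates and every codegeneracy $s^i$ projects $v_0$-coordinates away. This exhibits $\CHS_d^\bullet$ as the upper-triangular ``$i<j$'' restriction of the bicosimplicial double-cobar object $Y^{m,n}=(S^{d-1})^{m\cdot n}$ on $S^{d-1}$. Applying $\ttot$ in one cosimplicial direction gives $\Omega S^{d-1}$, and applying it a second time yields $\Omega^2 S^{d-1}$; the passage from the full bicosimplicial to the upper-triangular part is the usual Fubini-type cofinality argument.

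For the second equivalence, note that $f\CHS_d(n)=\CHS_d(n)\times SO_d^n$, and the principal bundle $SO_{d-1}\to SO_d\to S^{d-1}$ (with $SO_{d-1}$ the stabilizer of $v_0$) produces an $S^{d-1}$-factor inside each copy of $SO_d$ that cancels against an $S^{d-1}$-factor of $\CHS_d(n)$, cosimplicial-level by cosimplicial-level. The canonical cosimplicial map $\underline{SO_{d-1}}^\bullet\to f\CHS_d^\bullet$ described in the statement is well defined because stabilizing $v_0$ is exactly what makes the map commute with insertions of the multiplicative units $\mu$ and $e$. To see that it is a $\ttot$-equivalence, I would compare Bousfield-Kan spectral sequences: after the above cancellation, $f\CHS_d^\bullet$ reduces, up to weak equivalence of cosimplicial spaces, to the usual cobar $\underline{SO_{d-1}}^\bullet$, and the totalization of the latter is $\Omega SO_{d-1}$.

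The main obstacle is making the cancellation and the cofinality arguments rigorous at the cosimplicial level; one has to carefully track how the twist in the semidirect-product composition (through the action $g_i\cdot y$ in the formula for $\oper\rtimes G$) interacts with cofaces and codegeneracies. Once this compatibility is verified, the two equivalences reduce to the formal identification $\ttot(\underline{X}^\bullet)\simeq \Omega X$, which I would invoke as a black box.
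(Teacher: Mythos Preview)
The paper gives no proof of this lemma: it is quoted verbatim from Salvatore (Corollary~10 and Proposition~16 of \cite{salvatore}) and closed with a \qedsymbol. So there is no argument in the paper to compare your sketch against; your proposal is an attempt to reconstruct Salvatore's proofs rather than the author's.

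On the substance of your sketch: the outline for the first equivalence is in the right spirit and close to what Salvatore does, though your ``Fubini-type cofinality'' step (passing from the full bicosimplicial object $(S^{d-1})^{m\cdot n}$ to the $i<j$ part) is not a standard lemma one can just invoke; it requires a concrete cosimplicial comparison, and you correctly flag this as the main obstacle.

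Your sketch for the second equivalence has a genuine gap beyond bookkeeping. The claim that the $S^{d-1}$ coming from the bundle $SO_{d-1}\to SO_d\to S^{d-1}$ ``cancels against an $S^{d-1}$-factor of $\CHS_d(n)$, cosimplicial-level by cosimplicial-level'' cannot be made literal: the bundle is not a product (so there is no honest $S^{d-1}$-factor inside $SO_d$), and even counting naively the numbers do not match, since $\CHS_d(n)$ carries $\binom{n}{2}$ sphere factors while $SO_d^{\,n}$ would contribute only $n$. So no levelwise reduction of $f\CHS_d^\bullet$ to $\underline{SO_{d-1}}^\bullet$ exists. Salvatore's actual argument does not proceed by such a cancellation; it identifies the relevant totalizations with explicit mapping spaces and compares them through the fibration on $SO_d$. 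Your fallback idea of comparing Bousfield--Kan spectral sequences is in principle viable, but note that the paper uses this very lemma as \emph{input} to analyse $E^2(f\CHS_d^\bullet)$, so one must be careful not to argue circularly.
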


\indent The following lemma is  well-known. We denote by $\Omega(SO_k)_1$ the component of the based loop space of $\Omega(SO_k)$ containing constant loops for  $k=d,\  d-1$.
\begin{lem}\label{Lrotation} Let $d=2m+1$ be an odd number greater than 2.\\
(1) There exist isomorphisms of rational homology algebras:
\[
H_*(SO_d)\cong \bigwedge (\beta_1,\dots,\beta_m), \quad H_*(SO_{d-1})\cong \bigwedge(\beta_1,\dots,\beta_{m-1},e)\] Here, $\bigwedge$ denotes the free anti-commutative algebra, and we set $\deg\beta_i=4i-1, \ \deg e=2m-1$. Furthermore, the generators can be taken as primitive elements with respect to the Alexander-Whitney diagonal.\\
(2) There exist isomorphisms of rational homology algebras:
\[
H_*(\Omega(SO_d)_1)\cong \Q [\gamma_1,\dots,\gamma_m],\quad   H_*(\Omega (SO_{d-1})_1)\cong \Q[\gamma_1,\dots,\gamma_{m-1},f]
\] Here, $\Q[\cdots]$ denotes the free commutative algebra, and we set $\deg\gamma_i=4i-2, \ \deg f=2m-2$.
\begin{flushright}
\qedsymbol
\end{flushright}
\end{lem}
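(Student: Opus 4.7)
The plan is to reduce both parts to the classical theorem (Cartan, Hopf, Borel) that the special orthogonal groups are rationally equivalent to products of odd-dimensional spheres:
\[
SO_{2m+1} \simeq_{\Q} S^3 \times S^7 \times \cdots \times S^{4m-1}, \qquad SO_{2m} \simeq_{\Q} S^3 \times S^7 \times \cdots \times S^{4m-5} \times S^{2m-1},
\]
as found in any standard reference on the rational homotopy of Lie groups. Once these decompositions are in hand, both parts follow from standard Künneth computations together with classical results for odd spheres and their loop spaces.

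For part (1), since $H_*(S^{2k-1};\Q)$ is the exterior algebra on its fundamental class in degree $2k-1$, the Künneth formula yields the claimed algebra isomorphisms, with $\beta_i$ (respectively $e$) defined as the image of the fundamental class of the sphere factor of dimension $4i-1$ (respectively $2m-1$). These generators are primitive: for each odd sphere $S^{2k-1}$, the reduced diagonal of its fundamental class vanishes for degree reasons, since $H_*(S^{2k-1};\Q) \otimes H_*(S^{2k-1};\Q)$ has no element of total degree $2k-1$ other than $[S^{2k-1}] \otimes 1$ and $1 \otimes [S^{2k-1}]$; primitivity is then preserved under the Künneth inclusion into the homology of the product. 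Since the coalgebra structure induced on $H_*(X;\Q)$ by any chain-level diagonal (in particular the Alexander--Whitney diagonal) coincides with the canonical coalgebra structure on homology, the primitivity claim for the AW diagonal follows.

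For part (2), apply the loop functor, which commutes with finite products and preserves rational equivalences between simply connected spaces. The universal cover $\mathrm{Spin}(d) \to SO_d$ induces a homotopy equivalence $\Omega(\mathrm{Spin}(d)) \to \Omega(SO_d)_1$, and $\mathrm{Spin}(d)$ has the same rational homotopy type as $SO_d$, hence
\[
\Omega(SO_d)_1 \simeq_{\Q} \prod_i \Omega S^{n_i},
\]
the product ranging over the sphere factors above. Serre's path-loop fibration computation gives $H_*(\Omega S^{2k-1};\Q) \cong \Q[\gamma]$ with $|\gamma|=2k-2$, and a final Künneth application produces the polynomial algebras of part (2), with $\gamma_i$ in degree $4i-2$ and $f$ in degree $2m-2$. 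The only step one might worry about is the AW-primitivity assertion in part (1), but it dissolves upon noting that the coalgebra structure on homology is canonical; everything else amounts to assembling classical pieces.
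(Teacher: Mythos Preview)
Your argument is correct and follows the standard route via the classical rational decomposition of $SO_k$ into products of odd spheres. Note, however, that the paper itself offers no proof: the lemma is introduced as ``well-known'' and closed with a bare \qedsymbol, so there is nothing to compare your approach against. Your sketch is exactly the kind of justification one would supply if asked to expand that \qedsymbol.
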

As the framed little balls operads are weak equivalent to the framed Kontsevich operads, Theorem \ref{Tmain1} is equivalent to the part 1 of the following theorem  which we will prove in the rest of the paper.
\begin{thm}\label{TKontsevich}
Let $d$ be an  odd number greater than 4.
\begin{itemize2}
\item[(1)] The chain operad $C_*(f\K_d)$ of the framed $d$-dimensional Kontsevich operad is not formal over $\Q$.
\item[(2)] The chain operad $C_*(f\CHS_d)$ of the framed $d$-dimensional choose-two operad is not formal over $\Q$.
\end{itemize2}

\end{thm}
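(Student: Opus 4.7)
The plan is to prove Theorem \ref{TKontsevich} by the two-step route announced in the introduction. I would first establish that the Bousfield--Kan spectral sequence $E^r_{*,*}(f\CHS_d^\bullet)$ does not degenerate at the $E^2$-page, then transfer the same non-degeneracy to $f\K_d^\bullet$ via the inclusion $f\K_d\hookrightarrow f\CHS_d$; this already rules out formality as multiplicative operads. The second stage upgrades this to the stronger non-formality statement as non-symmetric operads by introducing a triple Massey-type bracket for the Gerstenhaber structure and showing that it is non-zero on $H_*(f\K_d)$.

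For the non-degeneracy step I would work with $\oper=f\CHS_d$ first and compare the $E^2$-page with the abutment. By Lemma \ref{Ltot}, the spectral sequence converges to $H_*(\Omega(SO_{d-1}))$, which for $d=2m+1\geq 5$ is the polynomial algebra $\Q[\gamma_1,\dots,\gamma_{m-1},f]$ by Lemma \ref{Lrotation}(2). On the other hand, $E^2_{*,*}\cong HH_{*,*}(H_*(f\CHS_d))$ carries a Gerstenhaber algebra structure and can be read off from the semi-direct product description $f\CHS_d=\CHS_d\rtimes SO_d$ together with the list of primitive generators $\beta_1,\dots,\beta_m$ of $H_*(SO_d)$ from Lemma \ref{Lrotation}(1). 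The idea is a counting argument in a suitable total degree: the $SO_d$-factors contribute a surplus of $E^2$-classes that the polynomial abutment cannot accommodate, and one can pinpoint an explicit cycle whose $d_r$-differential for some $r\geq 2$ is forced to be non-zero, as foreshadowed in the introduction. The transfer to $f\K_d$ uses the fact that the operad inclusion $f\K_d\hookrightarrow f\CHS_d$ induces a map of spectral sequences, and one verifies that the witnessing cycle lies in the image of this map since its ingredients come from the $SO_d$ factors and from basic configuration classes already present in $\K_d$.

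For the second stage, I would construct an invariant $\mathfrak{m}(a,b,c)$ on triples of homology classes with vanishing pairwise Gerstenhaber brackets, defined for any chain operad $\oper$ whose homology is isomorphic to $H_*(f\K_d)$ (with no multiplicative structure required). The construction follows the classical Massey product recipe: pick cycles representing $a,b,c$, use the vanishing of the relevant brackets to choose bounding chains built from partial compositions $\circ_i$, and assemble them into a new cycle whose class is the triple bracket. Since the operadic structure is not strictly associative up to chosen homotopies, I would perform all lifts on a cofibrant resolution in a model category of operads, as hinted in section \ref{Sobstruction}. The heart of the argument is then a comparison theorem stating that, for a multiplicative operad, $\mathfrak{m}$ coincides (up to sign) with the leading non-trivial differential of the Bousfield--Kan spectral sequence on the corresponding class. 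Combined with step one, this forces $\mathfrak{m}\neq 0$ on $H_*(f\K_d)$, and since $\mathfrak{m}$ is an invariant of the quasi-isomorphism class of the underlying non-symmetric operad, formality is ruled out.

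The main obstacle is the second stage. While Massey products for differential graded algebras are well understood, extending the construction to the Gerstenhaber bracket in an operadic setting raises two subtleties: tracking the signs produced by the combinatorics of partial compositions, and controlling the cycle-level dependence so that $\mathfrak{m}$ is preserved by arbitrary zigzags of weak equivalences of non-symmetric operads. The introduction warns that $\mathfrak{m}$ may genuinely depend on choices of representatives, which is precisely why the model-categorical framework is indispensable; making this dependence compatible with non-symmetric operad maps is what separates Theorem \ref{TKontsevich} from the weaker multiplicative non-formality that is already visible from the spectral sequence.
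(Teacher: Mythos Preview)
Your plan matches the paper's proof closely: the non-degeneracy argument (Lemma \ref{Lspectralseq}) via a generator count on $E^2(f\CHS_d^\bullet)$ versus $H_*(\Omega(SO_{d-1})_1)$, transferred to $f\K_d$ by the inclusion, followed by the Massey-type obstruction of Definition \ref{Dobstruction} with a cofibrant replacement mediating the comparison.

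One point to sharpen, since it is exactly where a direct attempt would stall. You assert that $\mathfrak{m}$ is ``an invariant of the quasi-isomorphism class of the underlying non-symmetric operad,'' but the paper explicitly does \emph{not} establish this and warns that the obstruction may depend on the chosen cycles, not just their classes. The actual mechanism is more modest and avoids the issue entirely: the obstruction cycle $\omega(\nu,g)$ is manifestly natural in the pair of \emph{cycles} $(\nu,g)$ under any single operad map. One therefore passes to a cofibrant model $\aoper$ of $C_*(f\K_d)$ (built under $\A_\infty$ so that a strict $\nu$ is already present), chooses cycles once in $\aoper$, and pushes them forward along the two weak equivalences $\aoper\to C_*(f\K_d)$ and, assuming formality, $\aoper\to H_*(f\K_d)$. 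On the homology side the zero differential lets one take both bounding chains $h$ and $\xi$ to be zero, so $\omega=0$; on the chain side the strict associativity of $\nu$ lets one take $\xi=0$, and then $\omega=\omega_1$ is literally the $d^2$-differential of $1\otimes\bar\beta_m$, nonzero by step one. No zigzag-invariance statement is ever proved or needed; the cofibrant replacement serves only to collapse the zigzag to a single outgoing map so that cycle-level naturality suffices.
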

\section{Non-degeneracy of the spectral sequences associated to the framed operads}\label{Snondegeneracy}

\indent In the rest of paper, $d=2m+1$ denotes an odd number greater than 4.  
\begin{lem}\label{Lspectralseq}
Let $\oper$ be $\K_d$ or $\CHS_d$.
\begin{itemize2}
\item[(1)] 
There exists an isomorphism of algebras
\begin{equation}\label{EQspectral}
E^2_{p,q}(f\oper^\bullet)\cong \bigoplus_{p_1+p_2=p, q_1+q_2=q}
HH_{p_1,q_1}(H_*(\oper))\otimes H_{p_2,q_2}(\CB_{d})
\end{equation}
which is natural with respect to the inclusion $i:\K_d\to \CHS_d$, and 
\item[(2)] Let $\bar\beta_m$ be the image of the element $\beta_m$ in Lemma \ref{Lrotation} by the natural map $H_{4m-1}(SO_d)\to H_{-1,4m-1}(\CB_{d})$. When 
we regard $1\otimes \bar\beta_m$ as an element of $E^2(f\oper^\bullet)$ under the isomorphism of (1), $d^2(1\otimes \bar\beta_m)$   is non-zero in $E^2(f\oper^\bullet)$. 
\end{itemize2}
\end{lem}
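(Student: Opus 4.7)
The plan for Part 1 is to analyze the semi-direct product at the chain level and show that the two cosimplicial structures decouple at $E^1$. On $f\oper^n=\oper(n)\times SO_d^n$, K\"unneth gives $C_*(f\oper^n)\cong C_*(\oper(n))\otimes C_*(SO_d)^{\otimes n}$. The only coupling between the two factors occurs in the inner cofaces $d^i(x;g_1,\dots,g_n)=(x\circ_i(g_i\cdot\mu);\, g_1,\dots,g_{i-1},g_i,g_i,g_{i+1},\dots,g_n)$ through the action of $g_i$ on $\mu\in\oper(2)$, but since $\mu$ represents the degree-zero class in the connected space $\oper(2)$, the action is homologically trivial. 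Hence on $E^1$ the coface equals the tensor of the Hochschild coface on $H_*(\oper(n))$ with the cobar coface on $H_*(SO_d)^{\otimes n}$. The normalized $E^1$ complex thus identifies with the tensor of the normalized Hochschild complex of $H_*(\oper)$ with the normalized cobar complex $\CB_d$, and K\"unneth on $H_*$ yields the iso on $E^2$. Naturality in $i:\K_d\to\CHS_d$ and compatibility with the algebra structures follow from functoriality.

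For Part 2, I would first prove the statement for $\oper=\CHS_d$ and then transfer to $\oper=\K_d$ via the naturality of Part 1. By Lemma \ref{Ltot} the cosimplicial map $\underline{SO_{d-1}}^\bullet\to f\CHS_d^\bullet$ induces the weak equivalence $\Omega(SO_{d-1})\simeq\ttot(f\CHS_d^\bullet)$, and hence an isomorphism $E^\infty(\underline{SO_{d-1}}^\bullet)\xrightarrow{\cong} E^\infty(f\CHS_d^\bullet)$ of the two Bousfield-Kan spectral sequences. On the source, $E^2_{-1,4m-1}(\underline{SO_{d-1}}^\bullet)=H_{-1,4m-1}(\CB_{d-1})$ equals the primitives of $H_{4m-1}(SO_{d-1})$; by Lemma \ref{Lrotation}(1) the primitive generators of $H_*(SO_{d-1})=\bigwedge(\beta_1,\dots,\beta_{m-1},e)$ have degrees $3,7,\dots,4m-5$ and $2m-1$, none equal to $4m-1$ for $d=2m+1>4$. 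Thus $E^2_{-1,4m-1}(\underline{SO_{d-1}}^\bullet)=0$, so $E^\infty_{-1,4m-1}(f\CHS_d^\bullet)=0$. Since $1\otimes\bar\beta_m$ is non-zero in $E^2_{-1,4m-1}(f\CHS_d^\bullet)$ and no differential lands in this bidegree (any prospective source $E^r_{r-1,4m-r}$ has positive cosimplicial index $r-1$ and therefore vanishes), the class must be killed by an outgoing $d^r(1\otimes\bar\beta_m)$ for some $r\geq 2$.

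To isolate this to $r=2$, I would verify that the targets $E^2_{-1-r,4m-2+r}(f\CHS_d^\bullet)$ vanish for all $r\geq 3$; this is a degree-counting argument using the tensor decomposition from Part 1 together with the explicit bidegrees of the generators of $HH_{*,*}(H_*(\CHS_d))$ and of $H_{*,*}(\CB_d)\cong\Q[\gamma_1,\dots,\gamma_m]$. For $\oper=\K_d$, naturality of Part 1 yields a commuting square of $E^r$ pages with $i_*=HH(i)\otimes\mathrm{id}$, so $i_*(1\otimes\bar\beta_m)=1\otimes\bar\beta_m$ and $i_*\bigl(d^2_{f\K_d}(1\otimes\bar\beta_m)\bigr)=d^2_{f\CHS_d}(1\otimes\bar\beta_m)\neq 0$, giving the claim.

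The main technical obstacle is the step ruling out $r\geq 3$: it requires handling $HH_{*,*}(H_*(\CHS_d))$ in fairly high cosimplicial degree while the total degree is pinned at $4m-3$. A cleaner alternative would be to compute $d^2(1\otimes\bar\beta_m)$ directly at the chain level from the explicit coface formulas, producing a specific non-zero class in $E^2_{-3,4m}(f\CHS_d^\bullet)$ and bypassing the discussion of higher differentials entirely.
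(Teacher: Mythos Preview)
Your Part~1 is the paper's argument, but your one-line justification is not quite right. Connectedness of $\oper(2)$ alone does not make the orbit map $SO_d\to\oper(2)$, $g\mapsto g\mu$, homologically trivial in positive degrees; one needs the parity argument specific to odd $d$ (the generators of $H_*(SO_d)$ sit in odd degrees while $H_{>0}(\oper(2))$ is concentrated in the even degree $d-1$). The paper phrases this as the triviality of the $H_*(SO_d)$-action on $H_*(\oper(n))$ for odd $d$, citing \cite[Theorem~5.4]{SW}, which is exactly what makes the semidirect product split on $E^1$.

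For Part~2 your route is genuinely dual to the paper's. You argue that the \emph{source} $1\otimes\bar\beta_m$ must die at $E^\infty$ (via the comparison with $\underline{SO_{d-1}}^\bullet$) and then rule out higher targets; the paper instead argues that the \emph{target} $\{x,x\}\otimes 1$ must become a boundary (its total degree is odd while all generators of the abutment $H_*(\Omega(SO_{d-1})_1)$ are even), and then observes that the only generator not already a permanent cycle is $1\otimes\gamma_m$, whose bidegree forces $r=2$. Both strategies are valid; the paper's buys you the identity $d^2(1\otimes\bar\beta_m)=\{x,x\}\otimes1$ (up to scalar), which is reused in Section~\ref{Sobstruction}, while yours only gives nonvanishing.

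Two genuine gaps remain in your sketch. First, the isomorphism $E^\infty(\underline{SO_{d-1}}^\bullet)\cong E^\infty(f\CHS_d^\bullet)$ requires strong convergence of the target spectral sequence; the paper secures this via Bousfield's criterion \cite[Theorem~3.4]{bousfield}, using the vanishing line $E^2_{p,q}=0$ for $q<-\tfrac{d-1}{2}p$ and $\tfrac{d-1}{2}>1$ (this is where $d\geq5$ enters). Second, your ``degree-counting'' step and the paper's identification of $\{x,x\}$ both rest on knowing $HH_{*,*}(H_*(\CHS_d))\cong H_*(\Omega^2S^{d-1})$, which the paper obtains from the formality of $C_*(\CHS_d)$ as a \emph{multiplicative} operad (a consequence of the formality of the coalgebra $C_*(S^{d-1})$). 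Once you have that, your degree count is in fact immediate: in total degree $4m-3$ the only monomial in $\Q[x]\otimes\Lambda[\{x,x\}]\otimes\Q[\gamma_1,\dots,\gamma_m]$ is $\{x,x\}$ itself, at bidegree $(-3,4m-2)$, so all $r\geq3$ targets vanish. The transfer to $\K_d$ via naturality is the same in both approaches.
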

\begin{proof}
We shall prove the part 1. There is a natural isomorphism $E^2_{*,*}(f\oper^\bullet)\cong HH_{*,*}(H_*(f\oper))$. The action of the Hopf algebra $H_*(SO_d)$ on $H_*(\oper (n))$ is trivial by the degree reason for any odd $d$, see \cite[Theorem 5.4]{SW}, so the semidirect product splits on the homology level (this is the point where we need $d$ is odd). It follows that as a cosimplical graded vector space , $H_*(f\oper^\bullet)$ is isomorphic to the  cosimplicial-levelwise tensor product of $H_*(\oper^\bullet)$ and the cosimplicial cobar complex associated to $H_*(SO_d)$. This  implies $HH_{*,*}(H_*(f\oper))\cong HH_{*,*}(H_*(\oper))\otimes H_{*,*}(\CB_d)$.\\
\indent  We shall show the part 2 for $\oper=\CHS_d$. In view of part 1, we easily see  $E^2_{p,q}(f\CHS_d^\bullet)=0$ for $q<-\frac{d-1}{2}p$. As $d\geq 5$, we have the inequality $\frac{d-1}{2}>1$. Also, note that $H_*(\Omega((SO_{d-1})_\Q))\cong H_*(\Omega(SO_{d-1})_1)$ as $\pi_1(SO_{d-1})\otimes \Q=0$, where $(-)_\Q$ denotes the rationalization. These facts and \cite[Theorem 3.4]{bousfield}  imply $E^r_{*,*}(f\CHS_d^\bullet)$  converges  to  $H_*(\Omega(SO_{d-1})_1)$. Note that the partial compositions of $\CHS_d$ is defined by the diagonal map of $S^{d-1}$.  This fact and the formality of the rational singular chain coalgebra $C_*(S^{d-1})$ imply the formality of the rational singular chain multiplicative operad $C_*(\CHS_d)$ . This multiplicative operad formality implies the Bousfield-Kan  spectral sequence associated to the cosimplicial space $\CHS_d^\bullet$  degenerates at $E^2$-page, see the proof of Theorem 1.4 in \cite{moriya}. This spectral sequence converges to the homology $H_*(\ttot(\CHS_d^\bullet))$. By these observations and  Lemma \ref{Ltot}, we have  an isomorphism
\begin{equation}\label{EQhochschild}
HH_{*,*}(H_*(\CHS_d))\cong H_{*}(\Omega^2S^{d-1}).
\end{equation}
Moreover,  this isomorphism is an isomorphism of Gerstenhaber algebras.  
On the other hand, the following isomorphism of graded algebras is well-known. 
\begin{equation}\label{EQcobar}
H_{*,*}(\CB_d)\cong H_*(\Omega(SO_d)_1)
\end{equation}
Putting the isomorphisms (\ref{EQspectral}), (\ref{EQhochschild}) and (\ref{EQcobar}) into together, we obtain the following:
\begin{equation}\label{EQframedchoosetwospec}
E^2(f\CHS_d^\bullet)\cong H_*(\Omega^2S^{d-1})\otimes H_*(\Omega(SO_d)_1)\Longrightarrow H_*(\Omega(SO_{d-1})_1)
\end{equation}
Let $\{-,-\}$ denote the Gerstenhaber bracket (Brawder operation) on $H_*(\Omega^2S^{d-1})$. It is well-known that $H_*(\Omega^2S^{d-1})$ is generated by two elements $x, \{x,x\}$ with $\deg x=d-3$ as a graded commutative algebra when $d$ is odd. So with Lemma \ref{Lrotation}, the graded algebras in (\ref{EQframedchoosetwospec}) have the following generators
\begin{equation}\label{EQgenerators}
\begin{split}
H_*(\Omega^2 S^{d-1})\otimes H_*(\Omega (SO_d)_1)\ : \ &x\otimes 1,\ \{ x,x\}\otimes 1,\ 1\otimes \gamma_1,\dots, 1\otimes\gamma_{m-1},\ 1\otimes\gamma _m \\
H_*(\Omega(SO_{d-1})_1)\ : \ & \ \ f\ ,\phantom{\{x,x\}\otimes 1 xxx}\ \gamma_1,\quad \dots,\quad \gamma_{m-1}
\end{split}
\end{equation}
In the following we consider the elements in the  upper horizontal line of (\ref{EQgenerators}) as elements in $E^2(f\CHS_d^\bullet)$. By comparison of the number of generators and their degree, it is plausible to expect the equation
\begin{equation}\label{EQdifferentialgamma}
d^2(1\otimes\gamma_m)=\{x,x\}\quad  \text{ up to non-zero scaler multiple.}
\end{equation}  
We shall verify this equation (\ref{EQdifferentialgamma}) in details.
Note that there exist morphisms of spectral sequences as follows.
\begin{equation}\label{EQspectralchoosetwo}
E^r_{p,q}(\CHS_d^\bullet)\longrightarrow E^r_{p,q}(f\CHS_d^\bullet)\longleftarrow E^r_{p,q}(\underline{SO_{d-1}}^\bullet).
\end{equation}
Here the left morphism is induced by the inclusion to the unity $*\to SO_d$ and the right one is induced by  the morphism in Lemma \ref{Ltot}.
We shall consider the case $r=2$. Note that $x\otimes 1$, $\{x,x\}\otimes 1$ come from the left hand side of the diagram (\ref{EQspectralchoosetwo}) and $1\otimes\gamma_i$ ($1\leq i\leq m-1$) comes from the right hand side of the same diagram in view of the isomorphism (\ref{EQhochschild}) and the isomorphism $E^2(\underline{SO_{d-1}}^\bullet)\cong H_*(\CB_{d-1})\cong H_*(\Omega(SO_{d-1})_1)$, see also Lemma \ref{Lrotation}. This means these elements are  cycles at any page of the middle spectral sequence in (\ref{EQspectralchoosetwo}) as the left and right hand sides of (\ref{EQspectralchoosetwo}) degenerate at $E^2$-page.  $\{x,x\}$ has the odd  degree and the all generators of the abutment $H_*(\Omega(SO_{d-1})_1)$ have even degree so $\{x,x\}\otimes 1$ must be a boundary in some page.
Suppose $\{x,x\}\otimes 1$ is  a boundary in $E^{r}$-page but not in $E^{r-1}$-page.   Take an element $y$ such that $d^r(y)=\{x,x\}\otimes 1$ We may write $y=y_1+k(1\otimes\gamma_m)$ where $y_1$ is a polynomial of $x\otimes 1$, $1\otimes\gamma_1,\dots, 1\otimes \gamma_{m-1}$, and $k$ is a scalar. But $y_1$ is a cycle since $1\otimes\gamma_1,\dots,1\otimes\gamma_{m-1}$ are cycles. This implies $d^r(y_1)=0$ and in turn, we have $kd^r(1\otimes\bar\gamma_m)=\{x,x\}\otimes 1$. 
Note that  $1\otimes
 \gamma_m$ corresponds to $1\otimes \bar\beta_m\in 1\otimes H_{-1,4m-1}(\CB_d)$ under the isomorphism (\ref{EQcobar}) and  $x$ corresponds an element $\alpha \in H_{d-1}(\CHS_d(2))\subset HH_{-2,d-1}(H_*(\CHS_d))$ and $\{x,x\}$ corresponds $\{\alpha,\alpha\}\in HH_{-3,2d-2}(H_*(\CHS_d(3))$ under the  isomorphism (\ref{EQhochschild}), where the bracket $\{\alpha,\alpha\}$ denotes the algebraic Gerstenhaber bracket recalled in  section \ref{Spreliminary} so  the bidegree of $d^r$ must be $(-2,1)$, which implies  $r=2$, and we have  proved the equation (\ref{EQdifferentialgamma}) and the part 2 of the lemma for the  case $\oper=\CHS_d$.\\
\indent For the case $\oper=\K_d$,  As the spectral sequences are natural for the inclusion $\K_d\subset \CHS_d$ and the image of $\{x,x\}$ by the inclusion is non-zero, which can be seen by elementary computation based on \cite[Theorem 7.4]{sinha},  the above result for $\oper=\CHS_d$ immediately  implies  $d^2(1\otimes \bar\beta_m)\not=0$ for $\oper=\K_d$. 
\end{proof}


\section{An obstruction to formality and proof of Theorem \ref{TKontsevich}}\label{Sobstruction}
In Lemma \ref{Lrotation} , we may take  a  generator $\beta_m\in H_{4m-1}(SO_d)$ to be primitive. In other words, we may assume 
\begin{equation}\label{EQprimitive}
\Delta\beta_m=1\otimes \beta_m+\beta_m\otimes 1
\end{equation} where $\Delta$ denotes the Alexander-Whitney diagonal. We regard $\beta_m$ as an element of $H_*(\fK_d(1))$ by the obvious homeomorphism $\fK_d(1)=\{id\}\times SO_d=SO_d$. By the definition of a semi-direct product (see section \ref{Spreliminary}) we  see that the following diagram is commutative.
\[
\xymatrix{SO_d\ar[r]^{\Delta\qquad\qquad }\ar@{=}[d] & SO_d\times SO_d\times SO_d\ar@{=}[r]&SO_d\times f\K_d(1)\times f\K_d(1)\ar[d]^{\psi}\\
f\K_d(1)\ar[rr]^{\varphi}&&f\K_d(2)}
\]
Here, $\Delta$ denotes the (topological) diagonal map, and  the maps $\psi$ and $\varphi$ are given by $\psi(x,y,z)=(x\cdot \mu)\circ (y,z)$ and $\varphi(y)=y\circ_1 \mu$ where $\mu$ is   the image of the unique point by the structure map $\A(2)\to f\K_d(2)$.  We also have an equation $\beta_m\cdot\mu=0$ as $H_{4m-1}(\K_d(2))=0$. By this equation, the equation (\ref{EQprimitive}), and the above diagram, we have the following equation.
\begin{equation}\label{EQconditionbeta}
\beta_m\circ_1\mu=\mu\circ_2\beta_m+\mu\circ_1\beta_m\in H_{4m-1}(f\K_d(2)).
\end{equation}
 In view of this one, we define an obstruction class for formality as follows. 
\begin{defi}\label{Dobstruction}
Let $\oper$ be a chain operad such that the homology $H_*(\oper)$ is isomorphic to $H_*(\fK_d)$ as a graded operad. Let $\nu\in \oper(2)_0$ be a cycle which represents a generater of $H_0(\oper(2))\cong \Q$. Let $g\in \oper(1)_{4m-1}$ be a cycle.  We impose the pair $(\nu, g)$ the following condition:  
\begin{equation}\label{EQconditiongnu}
[g\circ_1 \nu ]=[\nu\circ_2 g+\nu\circ_1 g] \in H_{4m-1}(\oper(2)).
\end{equation}
Here, $[a]$ denotes the homology class represented by a cycle $a$. \\ 
\indent We can pick an element $h \in \oper(2)_{4m}$ such that 
\begin{equation}\label{EQconditionh}
dh=\nu\circ_2g+\nu\circ_1g-g\circ_1\nu .
\end{equation} As $[\nu]$ represents an associative multiplication, we can pick an element $\xi\in \oper(3)_1$ such that
\begin{equation}\label{EQconditionxi}
d\xi=\nu\circ_2\nu-\nu\circ_1\nu.
\end{equation}
Then, we define an \textit{obstruction cycle}  $\omega=\omega_{\oper}(\nu,g)\in\oper(3)_{4m}$ by
\begin{equation}\label{EQomega}
\begin{split}
\omega=\omega_1-\omega_2,\quad &\omega_1=\nu\circ_2h-h\circ_1\nu+h\circ_2\nu-\nu\circ_1h \\
& \omega_2=g\circ_1 \xi+\xi\circ_1 g+\xi\circ_2 g+\xi\circ_3 g
\end{split}
\end{equation}
We will see this is a cycle for the internal differential of $\oper(3)$ in the proof of Lemma \ref{Lobstruction}.

As $[\nu]$ defines a structure of a multiplicative operad on $H_*(\oper)$, we can consider the Hochschild complex of $H_*(\oper)$. Its differential $\delta_\nu:H_*(\oper(n))\to H_*(\oper(n+1))$ is given by
$\delta_{\nu}([x])=[\nu\circ_2x+\sum_{i=1}^{n-1}(-1)^nx\circ_i\nu+(-1)^n\nu\circ_1x]$\\
\indent We call the element $[\omega]$ of the quotient $H_{4m}(\oper(3))/ \delta_\nu H_{4m}(\oper(2))$ represented by $\omega$ the 
\textit{obstruction class (or set)} for the pair $(\nu,g)$.
\begin{flushright}
\qedsymbol
\end{flushright}
\end{defi}
\begin{lem}\label{Lobstruction}
Under the  notations of Definition \ref{Dobstruction}, $\omega$ is a cycle and the corresponding class $[\omega]$ is independent of choise of $h$ and $\xi$. 
\end{lem}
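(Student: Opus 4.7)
My plan is to verify three claims in order: (a) $d\omega=0$; (b) $[\omega]$ is unchanged when $h$ is replaced; (c) $[\omega]$ is unchanged when $\xi$ is replaced. For (a), I compute $d\omega_1$ and $d\omega_2$ separately via the Leibniz rule for partial compositions, bookkeeping the signs coming from $|\nu|=0$, $|g|=4m-1$, $|h|=4m$, $|\xi|=1$. Since $\nu,g$ are cycles,
\[
d\omega_2=-g\circ_1(\nu\circ_2\nu-\nu\circ_1\nu)+\sum_{i=1}^{3}(\nu\circ_2\nu-\nu\circ_1\nu)\circ_i g,
\]
while $d\omega_1$ comes from inserting $dh=\nu\circ_2 g+\nu\circ_1 g-g\circ_1\nu$ into $\nu\circ_2(dh)-(dh)\circ_1\nu+(dh)\circ_2\nu-\nu\circ_1(dh)$, producing twelve length-three compositions. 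I then rewrite each in canonical form via the sequential axiom $(x\circ_i y)\circ_{i+j-1}z=x\circ_i(y\circ_j z)$ or the appropriate case of the parallel axiom, according to whether the outer insertion index lies inside or outside the inner $y$-block. The four summands of the forms $\nu\circ_1(g\circ_1\nu)$ and $\nu\circ_2(g\circ_1\nu)$ appear twice with opposite signs and cancel; the remaining eight match $d\omega_2$ term by term, so $d\omega=0$.

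For (b), let $h,h'$ both satisfy (\ref{EQconditionh}) and set $k:=h-h'$, a cycle in $\oper(2)_{4m}$. Since $\omega_2$ does not depend on $h$,
\[
\omega-\omega'=\nu\circ_2 k-k\circ_1\nu+k\circ_2\nu-\nu\circ_1 k,
\]
which is exactly a chain-level representative of $\delta_\nu[k]$. Hence $[\omega]-[\omega']\in\delta_\nu H_{4m}(\oper(2))$, as required.

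For (c), let $\xi,\xi'$ both satisfy (\ref{EQconditionxi}) and set $\eta:=\xi-\xi'$, a cycle in $\oper(3)_1$. The crucial observation is $H_1(\oper(3))=0$: by hypothesis it equals $H_1(\fK_d(3))$, and since $\fK_d(3)=\K_d(3)\times SO_d^3$ as a space, Künneth reduces it to $H_1(\K_d(3))$ together with $H_1(SO_d)$. The former vanishes because $\K_d(n)$ is weakly equivalent to $F_n(\R^d)$, whose homology is concentrated in multiples of $d-1$, and the latter because every generator of $H_*(SO_d)$ listed in Lemma \ref{Lrotation} has degree at least $3$ (recall $d\geq 5$). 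Writing $\eta=d\zeta$ with $\zeta\in\oper(3)_2$ and applying Leibniz (using $|g|$ odd and $dg=0$),
\[
\omega-\omega'=\pm\Bigl(g\circ_1\eta+\sum_{i=1}^{3}\eta\circ_i g\Bigr)=\pm d\Bigl(-g\circ_1\zeta+\sum_{i=1}^{3}\zeta\circ_i g\Bigr),
\]
a boundary in $\oper(3)$; in particular $[\omega]=[\omega']$ already in $H_{4m}(\oper(3))$.

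The main obstacle is the bookkeeping in step (a): the twelve length-three compositions must each be put into canonical form by the correct case of the operad axioms before the cancellation pattern becomes visible. Parts (b) and (c) are then immediate from the definition of $\delta_\nu$ and the vanishing $H_1(\oper(3))=0$.
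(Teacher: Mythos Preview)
Your proof is correct and follows essentially the same route as the paper: you compute $d\omega_1$ and $d\omega_2$ separately via Leibniz and the operad axioms and observe they coincide, you recognize $\omega_1-\omega_1'$ as a chain-level $\delta_\nu$ of the cycle $h-h'$, and you kill the $\xi$-dependence using $H_1(\oper(3))\cong H_1(f\K_d(3))=0$. Your write-up is in fact slightly more explicit than the paper's (you spell out the cancellation of the four $\nu\circ_i(g\circ_1\nu)$ terms and the K\"unneth argument for the vanishing of $H_1$), but there is no substantive difference in strategy.
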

\begin{proof}
We first show $\omega$ is a cycle. 
\[
\begin{split}
d\omega_1&=\nu\circ_2dh-dh\circ_1\nu+ dh\circ_2\nu -\nu\circ_1dh \\
&=\nu\circ_2(\nu\circ_2g+\nu\circ_1g-g\circ _1\nu)-(\nu\circ_2g+\nu\circ_1g-g\circ_1 \nu)\circ_1\nu\\
&+(\nu\circ_2g+\nu\circ_1g-g\circ_1 \nu)\circ_2\nu-\nu\circ_1(\nu\circ_2g+\nu\circ_1g-g\circ_1 \nu)
\end{split}
\]
By the associativity of partial composition,  we have $\nu\circ_2(\nu\circ_2g)=(\nu\circ_2\nu)\circ_3g$ and $(\nu\circ_2g)\circ_1\nu=(\nu\circ_1\nu)\circ_3g$, for example. By using these and similar equalities, we have
\[
\begin{split}
d\omega_1 &=-g\circ_1 (\nu\circ_2\nu-\nu\circ_1\nu)+(\nu\circ_2\nu-\nu\circ_1\nu)\circ_1g\\
&
+(\nu\circ_2\nu-\nu\circ_1\nu)\circ_2g+(\nu\circ_2\nu-\nu\circ_1\nu)\circ_3g\\
&=-g\circ_1 d\xi+ (d\xi)\circ_1g+(d\xi)\circ_2g+(d\xi)\circ_3g\\
&=d\omega_2
\end{split}
\]
Thus, we have $d\omega=d(\omega_1-\omega_2)=0$. \\
\indent Let $h'$, $h''$ (resp. $\xi'$, $\xi''$) be two elements satisfying the condition (\ref{EQconditionh}) (resp. (\ref{EQconditionxi})) in Definition \ref{Dobstruction}. In the rest of the proof,  $\omega'=\omega'_1-\omega'_2$ and  $\omega''=\omega''_1-\omega''_2$ denote the elements defined by the equation (\ref{EQomega}),
 using $(h',\xi')$ and $(h'',\xi'')$ respectively.    As $ h'-h''$ is a cycle, the class of 
$\omega'_1-\omega''_1$
belongs to $\delta_\nu H_{4m}(\oper(2))$. As $H_1(\oper(3))\cong H_1(f\K_d(3))=0$ and $\xi'-\xi''\in \oper(3)_1$ is a cycle, it is a boundary. So $\omega'_2-\omega''_2$ is also boundary. Thus $\omega'-\omega''$ represents the zero class  in $H_{4m}(\oper(3))/ \delta_\nu H_{4m}(\oper(2))$. In other words, $[\omega']=[\omega'']$.
\end{proof}
The class $[\omega]$ may depend  not only on the \textit{classes} $[\nu]$ and $[g]$ but also on the \textit{cycles}  $\nu$ and $g$. We must take care about these choices in the  proof of Theorem \ref{TKontsevich}. 
\begin{rem}\label{Robstruction}
Note that the equation (\ref{EQconditiongnu}) is equivalent to the equation $\{[g],[\nu]\}=0$, where $\{-,-\}$ denotes the Gerstenhaber bracket on the Hochschild homology of $H_*(\oper)$.
and the associativity equation $ [\nu]\circ_2[\nu]-[\nu]\circ_1[\nu]=0$ is equivalent to the equation $\frac{1}{2}\{[\nu],[\nu]\}=0$. Our class $[\omega]$ is something like "Massey triple bracket" for $[g], [\nu],[\nu]$. This point of view was pointed out to the author by V. Turchin. The cycle $\omega$ should be defined as $\{\nu, h\}\pm\{g,\xi\}$ under the notations of Definition \ref{Dobstruction} if the Hochschild complex of $\oper$ were a differential graded Lie algebra (with some degree shift). But actually it is not so because $\{-,-\}$ is not (anti-)derivative for the internal differential of $\oper$, and the  actual definition is different from the above formula in signs. This makes it difficult to prove the class $[\omega]$ does not depend on choices of cycles $g$, $\nu$ in each class. 
\end{rem}
We use a model category of chain operads. For general theory of model categories, see \cite{hovey}. It is known that the category of chain operads has a model category structure where weak equivalences are the same as those given in section \ref{Spreliminary}  (see \cite[Theorem 2.1]{moriya} or \cite[Theorem 1.1]{muro}, see also \cite{hinich} for a model category of symmetric operads).
\begin{proof}[Proof of Theorem \ref{TKontsevich}]
We shall prove part 1. The proof of part 2 is completely analogous and we omit it.\\
\indent Set $\oper=C_*(f\K_d)$. Let $\nu\in \oper(2)$ be the $0$-cycle  represented by the image of the unique point  by the structure morphism $\A\to f\K_d$ and $g\in \oper(1)_{4m-1}$ be any cycle which represents the primitive generator $\beta_m$. The pair $(\nu, g)$ satisfies the condition (\ref{EQconditiongnu}) in Definition \ref{Dobstruction}. As $\nu$ is strictly associative, we may take zero  as $\xi$. In this case, by definition, $E^2(f\oper^\bullet)$ is naturally considered as a sub-vector space of $H_*(\oper(-))/\delta_{\nu}H_*(\oper(-))$.  Under this identification, we easily see $[\omega(\nu, g)]=d_2(1\otimes \bar \beta_{4m})$  by unwinding the definition of the differential of the spectral sequence. So by the part 2 of Lemma \ref{Lspectralseq}, we see $[\omega] \in H_{4m}(\oper(3))/ \delta_\nu H_{4m}(\oper(2))$ is non-zero for this choise of $\nu$ and $g$.
Let $\A_\infty$ be the Stasheff's associahedral chain operad. We consider the non-unital version i.e., $\A_\infty(0)=0$ so $\A_\infty$ is a cofibrant operad and has a set of generators $\{\nu_i\mid i\geq 1\}$ consisting of the fundamental classes of cells of the associahedra (see \cite{stasheff,MT}). We define a morphism of operads $f:\A_\infty\to \oper$ by $\nu_1\mapsto \nu$, and taking the other generators to  zeros.  By a functorial factorization, $f$ is factorized as $\xymatrix{\A_\infty\ \ar@{>->}[r]_i&\aoper\ar@{->>}[r]^{\sim}_{p}&\oper}$. As $\A_\infty$ is cofibrant, so is $\aoper$. \\
\indent Set $\homology =H_*(\oper)$. Suppose $\oper$ is formal. In other words, $\oper$ and $\homology$ are connected by a chain of weak equialences of operads. As $\aoper$ is cofibrant (and any chain operad is fibrant), by the theory of model categories, there exists a weak equivalence $q: \aoper \to \homology$. We can take a cycle $g'\in\aoper(1)$  such that the class $[g']$ goes to $\beta_m$ by $p_*$.  The pair $(i(\nu_1),g')$ satisfies the condition of Definition \ref{Dobstruction} for $\aoper$.  It is clear that the pairs $(\nu, p(g'))$, $(q(g'), qi(\nu_1))$ also  satisfy the condition  in Definition \ref{Dobstruction}. We have isomorphisms
\begin{equation}
\xymatrix@R=0pt@M=9pt{
H_*(\oper) &H_*(\aoper)\ar[l]_{p_*}^{\cong}\ar[r]^{q_* }_{\cong }&H_*(\homology)\\
\rotatebox{90}{$\in$}&\rotatebox{90}{$\in$}&\rotatebox{90}{$\in$}\\
\omega(\nu, p(g')) &\omega(i(\nu_1),g')\ar@{|->}[l]\ar@{|->}[r]
&
\omega(qi(\nu_1),q(g'))
}
\end{equation}
By these isomorphisms, $[\omega(\nu, p(g'))]$ corresponds to $[\omega(qi(\nu_1), q(g'))]$. As we show in the above, $[\omega(\nu, p(g'))]$ is non-zero. On the other hand, the differential of $\homology$ is zero, we may choose zeros  as $h$ and $\xi$ in the definition of $\omega(qi(\nu_1),q(g'))$.  Hence $\omega(qi(\nu_1), q(g'))$ represents zero in $H_{4m}(\homology(3))/ \delta_{q(\nu_1)} H_{4m}(\homology(2))$. This is a contradiction.
\end{proof}
\section*{Acknowledgements}
The author is grateful to Thomas Goodwillie, Robin Koytcheff, Dev Prakash Sinha, Paul Arnaud Songhafouo Tsopm\'en\'e, and Victor Turchin for very valuable discussions and comments about the subject of this paper. He deeply thanks Dev Sinha and Ismar Voli\'c for  giving me an opportunity to meet the above people and taking care of my trip kindly. He is also grateful to Masana Harada for fruitful comments for this paper and refreshing conversation.

\end{document}